\def\@settitle{%
  \vspace*{-8pt}
  \begin{flushleft}%
    \LARGE\bfseries
    \strut\@title\strut
  \end{flushleft}%
}
\def\@setauthors{%
  \begingroup
  \def\thanks{\protect\thanks@warning}%
  \trivlist
  \raggedright
  \large \@topsep27\p@\relax
  \advance\@topsep by -\baselineskip
  \item\relax
  \author@andify\authors
  \def\\{\protect\linebreak}%
  \authors
  \ifx\@empty\contribs
  \else
    ,\penalty-3 \space \@setcontribs
    \@closetoccontribs
  \fi
  \normalfont
  \endtrivlist
  \endgroup
}
\def\@setaddresses{\par
  \nobreak \begingroup
  \small\raggedright
  \def\author##1{\nobreak\addvspace\smallskipamount}%
  \def\\{\unskip, \ignorespaces}%
  \interlinepenalty\@M
  \def\address##1##2{\begingroup
    \par\addvspace\bigskipamount\noindent
    \@ifnotempty{##1}{(\ignorespaces##1\unskip) }%
    {\ignorespaces##2}\par\endgroup}%
  \def\curraddr##1##2{\begingroup
    \@ifnotempty{##2}{\nobreak\noindent\curraddrname
      \@ifnotempty{##1}{, \ignorespaces##1\unskip}\/:\space
      ##2\par}\endgroup}%
  \def\email##1##2{\begingroup
    \@ifnotempty{##2}{\nobreak\noindent E-mail address%
      \@ifnotempty{##1}{, \ignorespaces##1\unskip}\/:\space
      \ttfamily##2\par}\endgroup}%
  \def\urladdr##1##2{\begingroup
    \def~{\char`\~}%
    \@ifnotempty{##2}{\nobreak\noindent\urladdrname
      \@ifnotempty{##1}{, \ignorespaces##1\unskip}\/:\space
      \ttfamily##2\par}\endgroup}%
  \addresses
  \endgroup
  \global\let\addresses=\@empty
}
\def\@setabstracta{%
    \ifvoid\abstractbox
  \else
    \skip@17pt \advance\skip@-\lastskip
    \advance\skip@-\baselineskip \vskip\skip@
    \box\abstractbox
    \prevdepth\z@ 
    \vskip-15pt
  \fi
}
\renewenvironment{abstract}{%
  \ifx\maketitle\relax
    \ClassWarning{\@classname}{Abstract should precede
      \protect\maketitle\space in AMS document classes; reported}%
  \fi
  \global\setbox\abstractbox=\vtop \bgroup
    \normalfont\small
    \list{}{\labelwidth\z@
      \leftmargin0pc \rightmargin\leftmargin
      \listparindent\normalparindent \itemindent\z@
      \parsep\z@ \@plus\p@
      
    }%
    \item[\hskip\labelsep\bfseries\abstractname.]%
}{%
  \endlist\egroup
  \ifx\@setabstract\relax \@setabstracta \fi
}
\def\ps@headings{\ps@empty
  \def\@evenhead{%
    \setTrue{runhead}%
    \normalfont\scriptsize
    \rlap{\thepage}\hfill
    \def\thanks{\protect\thanks@warning}%
    \leftmark{}{}}%
  \def\@oddhead{%
    \setTrue{runhead}%
    \normalfont\scriptsize
    \def\thanks{\protect\thanks@warning}%
    \rightmark{}{}\hfill \llap{\thepage}}%
  \let\@mkboth\markboth
}\ps@headings
\def\section{\@startsection{section}{1}%
  \z@{-1.4\linespacing\@plus-.5\linespacing}{.8\linespacing}%
  {\normalfont\bfseries\Large}}
\def\subsection{\@startsection{subsection}{2}%
  \z@{-.8\linespacing\@plus-.3\linespacing}{.5\linespacing\@plus.2\linespacing}%
  {\normalfont\bfseries\large}}
\def\subsubsection{\@startsection{subsubsection}{3}%
  \z@{.7\linespacing\@plus.2\linespacing}{-1.5ex}%
  {\normalfont\bfseries}}
\def\@secnumfont{\bfseries}
\renewcommand\contentsnamefont{\bfseries}
\def\@starttoc#1#2{\begingroup
  \setTrue{#1}%
  \par\removelastskip\vskip\z@skip
  \@startsection{}\@M\z@{\linespacing\@plus\linespacing}%
    {.5\linespacing}{
      \contentsnamefont}{#2}%
  \ifx\contentsname#2%
  \else \addcontentsline{toc}{section}{#2}\fi
  \makeatletter
  \@input{\jobname.#1}%
  \if@filesw
    \@xp\newwrite\csname tf@#1\endcsname
    \immediate\@xp\openout\csname tf@#1\endcsname \jobname.#1\relax
  \fi
  \global\@nobreakfalse \endgroup
  \addvspace{32\p@\@plus14\p@}%
  \let\tableofcontents\relax
}
\def\contentsname{Contents}
\def\l@section{\@tocline{2}{.5ex}{0mm}{5pc}{}}
\def\l@subsection{\@tocline{2}{0pt}{2em}{5pc}{}}
\def\to{\mathchoice{\longrightarrow}{\rightarrow}{\rightarrow}{\rightarrow}}
\newcommand{\shortxra}[2][]{\ext@arrow 0359\rightarrowfill@{#1}{#2}}
\def\longrightarrowfill@{\arrowfill@\relbar\relbar\longrightarrow}
\newcommand{\longxra}[2][]{\ext@arrow 0359\longrightarrowfill@{#1}{#2}}
\newcommand{\tmfrac}[2]{\mbox{\large$\frac{#1}{#2}$}} 
\theoremstyle{plain}
\newtheorem*{theorem*}{Theorem}
\newtheorem*{lemma*} {Lemma}
\newtheorem*{corollary*} {Corollary}
\newtheorem*{proposition*} {Proposition}
\newtheorem{theorem}{Theorem}[section]
\newtheorem{lemma}[theorem]{Lemma}
\theoremstyle{remark}
\newtheorem*{claim}{Claim}
\theoremstyle{definition}
\def\be{\begin{equation}}
\def\ee{\end{equation}}
\def\co{\colon}
\def\RR{\mathcal{R}}
\def \K {\mathbf{K}}\def \Z {\mathbf{Z}}
\def\R{\mathbb{R}}
\def\eps{\epsilon}
\def\K{\mathbb{K}}
\def\Z{\mathbb{Z}}
\def\N{\mathbb{N}}
\def\part{\partial}
\def\bp{\begin{pmatrix}}
\def\sm{\setminus}\def\ep{\end{pmatrix}}
\def\bn{\begin{enumerate}}
\def\en{\end{enumerate}}\def\ba{\begin{array}}
\def\ea{\end{array}}
\def\fr12{\frac{1}{2}}
\def\ol{\overline}
\def\op{\operatorname}
\def\K{\mathbb{K}}
\def\cmtbf#1{} \def\cmt#1{}
\def\PP{\mathcal{P}}
\def\ZZ{\mathcal{Z}}
\def\QQ{\mathcal{Q}}
\def\XX{\mathcal{X}}
\def\YY{\mathcal{Y}}
\def\mfp{\mathfrak{P}}
\def\mfg{\mathfrak{G}}
\def\mfpsym{\mathfrak{P}^{\op{sym}}}
\def\mfgsym{\mathfrak{G}^{\op{sym}}}
\def\mfpnorm{\mathfrak{P}^{\op{norm}}}
\def\mfgnorm{\mathfrak{G}^{\op{norm}}}
\begin{document}

\title{The Grothendieck group of polytopes and norms}

\author{Jae Choon Cha}
\address{
  Department of Mathematics\\ POSTECH\\ Pohang 37673\\ Republic of Korea
  \linebreak
  School of Mathematics\\ Korea Institute for Advanced Study \\ Seoul 02455\\ Republic of Korea
}
\email{jccha@postech.ac.kr}

\author{Stefan Friedl}
\address{Fakult\"at f\"ur Mathematik\\ Universit\"at Regensburg\\ Germany}
\email{sfriedl@gmail.com}

\author{Florian Funke}
\address{Mathematisches Institut\\Universit\"at Bonn\\ Germany}
\email{ffunke@math.uni-bonn.de}

\def\subjclassname{\textup{2000} Mathematics Subject Classification}
\expandafter\let\csname subjclassname@1991\endcsname=\subjclassname \expandafter\let\csname
subjclassname@2000\endcsname=\subjclassname 

\begin{abstract} 
  Polytopes in $\R^n$ with integral vertices form a monoid under the
  Minkowski sum, and the Grothendieck construction gives rise to a
  group.  We show that every symmetric polytope is a norm in this
  group for every~$n$.
\end{abstract}
\maketitle

\section{Introduction}

In this paper we define a \emph{polytope} in $\R^n$ to be the convex
hull of a finite subset of $\R^n$.  If the finite subset lies in the
lattice $\Z^n$ in~$\R^n$, then we say that the polytope is
\emph{integral}.


We denote by $\mathfrak{P}(n)$ the set of all integral polytopes in~$\R^n$.
Given two polytopes $\PP$ and $\QQ$ in $\R^n$, the \emph{Minkowski sum
  of $\PP$ and $\QQ$} is defined to be the polytope
\[
  \PP+\QQ:=\{ p+q \mid p\in \PP\mbox{ and }q\in \QQ\}.
\]
Under the Minkowski sum $\mfp(n)$ becomes an abelian monoid, where the
identity element is the polytope consisting of the origin.  We denote
by $\mfg(n)$ the Grothendieck group of the monoid $\mfp(n)$. (See
Section~\ref{section:polytope-group} for details.)

We introduce a few more definitions:
\begin{enumerate}
\item The \emph{mirror image} of a polytope $\PP$ is
  $\ol{\PP}:=\{ -x\mid x\in \PP\}$.
\item A polytope $\PP$ is \emph{symmetric} if $\PP=\ol{\PP}$.
  Symmetric polytopes form a submonoid $\mfpsym(n)\subset \mfp(n)$ and
  a subgroup $\mfgsym(n)\subset \mfg(n)$.
\item An (integral) polytope $\PP$ is an (\emph{integral}) \emph{norm}
  if there exists an (integral) polytope $\QQ$ such that
  $\PP=\QQ+\ol{\QQ}$. (What we call norms are often referred to as
  {difference bodies}, but in light of
  Section~\ref{subsection:motivation} we prefer the non-standard name
  of a {norm}.)  Integral norms form a submonoid $\mfpnorm(n)\subset
  \mfp(n)$ and they generate a subgroup $\mfgnorm(n)\subset \mfg(n)$.
\end{enumerate}

Clearly a polytope that is a norm is also symmetric.  In the real
setting the converse holds. More precisely, any symmetric polytope
$\PP$ can be written as
\[ \PP\,=\, \tmfrac{1}{2}\PP+\tmfrac{1}{2}\PP\,=\, \tmfrac{1}{2}\PP+\ol{\tmfrac{1}{2}\PP}.\]
This shows that symmetric  polytopes are also  norms. 

In the remainder of the paper we study only integral polytopes and integral norms. Since any integral norm is symmetric it follows that $\mfpnorm(n)\subset
\mfpsym(n)$ and $\mfgnorm(n)\subset \mfgsym(n)$ for any~$n$.
We address the question whether all symmetric integral polytopes are integral norms.
The question arises naturally on its own, and in addition, there is a
motivation from the study of group rings and low dimensional topology.
See Section~\ref{subsection:motivation} for a related discussion.

Every one-dimensional symmetric integral polytope $\PP$ is of the form
$\PP=[-x,x]$ for some $x\in \Z_{\geq 0}$.  It can be written as
$\PP=\QQ+\ol{\QQ}$ where $\QQ=[0,x]$.  This shows that every
one-dimensional symmetric integral polytope is in fact an integral norm. Thus
$\mfpnorm(1)=\mfpsym(1)$ and $\mfgnorm(1)=\mfgsym(1)$.

The situation is more subtle in  dimension  two and higher. First of all
we have the following elementary lemma.

\begin{lemma}\label{lem:notanorm}
  For any $n\ge 2$ we have $\mfpsym(n)\ne \mfpnorm(n)$.
\end{lemma}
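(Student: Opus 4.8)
The plan is to produce, for every $n\ge 2$, an explicit symmetric integral polytope lying in $\mfpsym(n)$ but not in $\mfpnorm(n)$. I would first settle the case $n=2$ by an edge-vector analysis of the difference body $\QQ+\ol{\QQ}$, and then promote any planar example to all higher $n$ by a flat embedding into the coordinate plane $\R^2\times\{0\}\subset\R^n$. The one nonelementary input I would use is the standard additivity of edge vectors under Minkowski sum in the plane: for convex integral polygons the edge vectors of $P+P'$ are obtained by merging those of $P$ and $P'$ according to direction, so that for each primitive direction $p$ the total lattice length $L_{P+P'}(p)$ of edges of $P+P'$ in direction $p$ equals $L_P(p)+L_{P'}(p)$. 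Applied to $\QQ$ and $\ol{\QQ}=-\QQ$, this gives $L_{\QQ+\ol{\QQ}}(p)=L_\QQ(p)+L_\QQ(-p)$.

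For $n=2$ I would take $\PP$ to be the centrally symmetric hexagon whose six edge vectors, read counterclockwise, are $\pm(1,0),\pm(3,1),\pm(0,1)$, each of lattice length $1$. Since $(1,0)+(3,1)+(0,1)=(4,2)$ is even, the center of $\PP$ is a lattice point, so after an integral translation we may assume $\PP=-\PP$; thus $\PP\in\mfpsym(2)$. Suppose $\PP=\QQ+\ol{\QQ}$. As $\PP$ is two-dimensional, so is $\QQ$, and reading off lengths in each direction and using the additivity above gives $L_\QQ(p)+L_\QQ(-p)=1$ for $p\in\{(1,0),(3,1),(0,1)\}$ and $L_\QQ=0$ in every other direction. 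Hence $\QQ$ is a triangle with exactly one edge in each of the directions $\epsilon_1(1,0),\epsilon_2(3,1),\epsilon_3(0,1)$ for some signs $\epsilon_i\in\{\pm1\}$, and being a closed polygon these satisfy $\epsilon_1(1,0)+\epsilon_2(3,1)+\epsilon_3(0,1)=0$. The second coordinate forces $\epsilon_3=-\epsilon_2$ and the first forces $\epsilon_1=-3\epsilon_2\notin\{\pm1\}$, a contradiction; so $\PP$ is not a norm.

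To pass to arbitrary $n\ge 2$, embed a planar non-norm $\PP_0\subset\R^2$ as $\PP:=\PP_0\times\{0\}\subset\R^n$; this is clearly symmetric and integral. If $\PP=\QQ+\ol{\QQ}$ for an integral $\QQ\subset\R^n$, then $\QQ+\ol{\QQ}$ spans the same subspace that $\QQ$ spans affinely, namely $\R^2\times\{0\}$; hence the affine hull of $\QQ$ is a translate $\R^2\times\{c\}$, and since $\QQ$ has lattice vertices, $c\in\Z^{n-2}$. Writing $\QQ=\QQ'\times\{c\}$ with $\QQ'\subset\R^2$ integral, we obtain $\QQ'+\ol{\QQ'}=\PP_0$, contradicting the planar case.

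The main obstacle is the planar construction: one must choose three primitive directions meeting two competing requirements — their sum must be even (so the hexagon is genuinely symmetric about a lattice point, hence a bona fide element of $\mfpsym(2)$), while no $\pm$ sign combination of them vanishes (so the forced triangle $\QQ$ cannot close up). A short congruence argument shows that three primitive vectors can sum to an even vector only when their mod-$2$ classes are exactly $(1,0),(0,1),(1,1)$, which is exactly the regime in which the sign equation can be arranged to fail, and the displayed triple realizes this. The remaining point requiring care is the additivity of edge vectors under Minkowski sum and the resulting formula $L_{\QQ+\ol{\QQ}}(p)=L_\QQ(p)+L_\QQ(-p)$, which is routine via normal fans but should be stated precisely, since the entire reduction to the sign equation rests on it.
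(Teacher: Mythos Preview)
Your argument is correct but follows a genuinely different route from the paper's. For $n=2$ the paper exhibits a thin \emph{parallelogram} with vertices $(k,0),(k,1),(-k,0),(-k,-1)$ and argues via the elementary subface principle (Lemma~\ref{lem:subface}): every face of $\QQ$ must occur, up to translation, inside a face of $\QQ+\ol{\QQ}=\PP$, and since both edges of $\PP$ are primitive this forces $\QQ$ to be one of four explicit polytopes, each dispatched by inspection. Your hexagon instead invokes the additivity of edge lengths under Minkowski sum to turn the problem into the sign equation $\epsilon_1(1,0)+\epsilon_2(3,1)+\epsilon_3(0,1)=0$, which visibly has no solution. The paper's approach is slightly more elementary (no normal-fan input) and uses a smaller example; yours is more structural and immediately suggests how to manufacture infinite families of non-norms by choosing primitive directions whose $\pm$-combinations never vanish. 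For $n\ge 3$ the paper uses the coordinate projection $\Psi\colon\R^n\to\R^2$ as a splitting of the inclusion, observing that it carries norms to norms, which is marginally slicker than your affine-hull computation but equivalent in substance.
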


Our main result is, that at least to our surprise, the situation is
very different if one considers the Grothendieck group. More
precisely, we have the following theorem.

\begin{theorem}\label{mainthm}
For any $n$ we have  $\mfgsym(n)=\mfgnorm(n)$.
\end{theorem}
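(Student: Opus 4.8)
The plan is to rephrase the statement in terms of the mirror involution and then to linearize the problem over a single well-chosen fan. Write $\iota$ for the involution of $\mfg(n)$ induced by $\PP \mapsto \ol{\PP}$; it is additive and satisfies $\iota^2 = \op{id}$. Since a polytope is symmetric exactly when its class is fixed by $\iota$, and since $\ol{\QQ+\ol{\QQ}} = \QQ+\ol{\QQ}$, one checks that $\mfgsym(n) = \ker(\op{id}-\iota)$ while $\mfgnorm(n) = \op{im}(\op{id}+\iota)$. The inclusion $\op{im}(\op{id}+\iota)\subseteq\ker(\op{id}-\iota)$ is automatic, so the whole content is the reverse inclusion, i.e. the vanishing of the Tate cohomology $\hat H^0(\Z/2;\mfg(n))$ for the $\iota$-action. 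Concretely, it suffices to show that each generator $[\PP]$ of $\mfgsym(n)$, with $\PP$ symmetric, lies in $\op{im}(\op{id}+\iota)$: if $[\PP]=(\op{id}+\iota)([\AA]-[\BB])$ then $[\PP]=[\AA+\ol{\AA}]-[\BB+\ol{\BB}]\in\mfgnorm(n)$.

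To produce such $\AA$ and $\BB$ I would restrict attention to a fan on which the support function of $\PP$ becomes free data. Recall that $\mfg(n)$ injects into the group of virtual support functions by cancellation, with $\iota$ acting by $h(u)\mapsto h(-u)$. Choose a complete fan $\Sigma$ that refines the normal fan of $\PP$, that is invariant under $u \mapsto -u$, and that is unimodular. Let $\mfg(n)_\Sigma\subseteq\mfg(n)$ be the subgroup of virtual lattice polytopes whose support functions are linear on every cone of $\Sigma$. Because $\Sigma$ is unimodular and simplicial, evaluation on the primitive ray generators identifies $\mfg(n)_\Sigma$ with $\Z^{R(\Sigma)}$, where $R(\Sigma)$ is the set of rays: unimodularity guarantees that every integral assignment of ray values is realized by a lattice (not merely rational) virtual polytope. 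Under this identification $\iota$ permutes the basis by $\rho\mapsto-\rho$, and since $\rho\neq-\rho$ for every ray, all orbits have size two. Hence $\mfg(n)_\Sigma\cong\bigoplus_{\{\rho,-\rho\}}\Z[\Z/2]$ is a \emph{free} $\Z[\Z/2]$-module.

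Freeness is the crux: for a free $\Z[\Z/2]$-module the fixed submodule equals the image of $\op{id}+\iota$, so $\hat H^0$ vanishes. Since $\PP$ is symmetric, $h_\PP$ is even and thus $\iota$-fixed inside $\mfg(n)_\Sigma$; choosing one ray from each antipodal pair and halving the data explicitly exhibits $g\in\mfg(n)_\Sigma$ with $(\op{id}+\iota)g=[\PP]$. Writing $g=[\AA]-[\BB]$ with $\AA,\BB$ honest lattice polytopes (add a large convex conewise-linear function to make each summand convex) completes the case of a single generator, and summing over the generators of $\mfgsym(n)$ gives the theorem.

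The step I expect to require the most care is the construction of $\Sigma$: I must arrange a refinement of the normal fan of $\PP$ that is simultaneously unimodular and invariant under the antipodal lattice automorphism. This is an equivariant toric resolution, which should follow from the standard existence of unimodular subdivisions carried out compatibly with the order-two action (the action fixes only the origin and no ray, so no stabilizer obstructs the subdivision). The second delicate point, already flagged above, is that integrality of the output $\AA,\BB$ genuinely needs unimodularity rather than mere simpliciality, since on a non-unimodular cone integral ray values can force rational vertices.
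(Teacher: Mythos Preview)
Your approach is correct in outline and genuinely different from the paper's. The paper proceeds by an elementary induction on~$n$: given a symmetric integral $\PP\subset\R^n$, it first adds a long vertical segment $d\ZZ+\ol{d\ZZ}$ so that the slice $(\PP+d\ZZ+\ol{d\ZZ})\cap\R^{n-1}$ becomes integral, applies the induction hypothesis to that $(n-1)$-dimensional slice, and then uses the identity $\YY+(\YY\cap H)=\YY_++\ol{\YY_+}$ (valid for any symmetric $\YY$ and hyperplane~$H$) to assemble the answer. No fans, support functions, or toric input appear; the whole proof fits on half a page and is fully constructive. Your route instead linearizes the problem via support functions on a centrally symmetric unimodular fan and reduces to the vanishing of $\hat H^0$ on a free $\Z[\Z/2]$-module. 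This is more conceptual, explains \emph{why} the obstruction vanishes, and would adapt to other lattice automorphisms; the cost is the background on virtual polytopes and the need to build the fan~$\Sigma$.

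One point to tighten: for the identification $\mfg(n)_\Sigma\cong\Z^{R(\Sigma)}$ you need not only that $\Sigma$ is unimodular and $(-1)$-invariant but also that every integral conewise-linear function on $\Sigma$ is actually a difference of support functions of integral polytopes. This is exactly where your parenthetical ``add a large convex conewise-linear function'' is doing work, and it requires $\Sigma$ to be \emph{projective} (polytopal), which you did not list among the requirements on~$\Sigma$. Since the normal fan of $\PP$ is already projective and centrally symmetric, and unimodular refinement can be achieved by symmetric stellar subdivisions (which preserve projectivity), this is easily arranged; but it should be stated. With that addition your argument goes through.
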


\subsection*{Acknowledgments} 

The first author was supported by NRF grants 2013067043 and
2013053914.  The second author was supported by the SFB 1085 `Higher
invariants', funded by the Deutsche Forschungsgemeinschaft (DFG).
 The third author was supported by the GRK 1150 `Homotopy and Cohomology' funded by the DFG, the Max Planck Institute for Mathematics, and the Deutsche Telekom Stiftung.
We wish to thank Matthias Nagel for helpful conversations.

\section{Preliminaries}

\subsection{The polytope group}\label{section:polytope-group}

Let $n\in \N$.  It is straightforward to show that the monoid
$\mfp(n)$ of integral polytopes has the cancellation property, i.e.\
for polytopes $\PP,\QQ,\RR\in \mfp(n)$ with $\PP+\QQ=\PP+\RR$ we have
$\QQ=\RR$.  (For instance see~\cite[Lemma~3.1.8]{Sc93}.)

On $\mfp(n)\times\mfp(n)$, define $(\PP,\QQ)\sim (\PP',\QQ')$ if
$\PP+\QQ'=\PP'+\QQ$.  This is an equivalence relation since $\mfp(n)$
has the cancellation property.  Let $\mfg(n)$ be the set of
equivalence classes.  It is straightforward to see that $\mfg(n)$ is
an abelian group under 
\[ (\PP,\QQ)+(\PP',\QQ') := (\PP+\PP',\QQ+\QQ').\]
It is referred to as the \emph{Grothendieck group} of $\mfp(n)$.  It
is also straightforward to see that the map
\[
  \begin{array}{c@{}c@{}c}
  \mfp(n)&{}\longrightarrow{}& \mfg(n) \\
  \PP&{}\longmapsto{}&(\PP,0)
  \end{array}
\]
is a monomorphism. We will use this monomorphism to identify $\mfp(n)$
with its image in~$\mfg(n)$. As usual, given $\PP$ and
$\QQ\in \mfp(n)$ we write $\PP-\QQ=(\PP,\QQ)$.

\subsection{Motivation: the marked polytope for a group ring element}
\label{subsection:motivation}

Here we discuss a motivation which leads us to consider
\emph{integral} polytopes.  Let $G$ be a finitely generated group.  An
\emph{integral polytope in $H_1(G;\R)$} is the convex hull of a finite number
of points in $\op{Im}\{H_1(G;\Z)\to H_1(G;\R)\}$. All the concepts and
definitions for polytopes in $\R^n$ generalize in an obvious way to
polytopes in $H_1(G;\R)$. In particular we can consider the monoid
$\mfp(H_1(G;\R))$ of integral polytopes in $H_1(G;\R)$ and we can consider the
corresponding group $\mfg(H_1(G;\R))$.

We denote by $\eps\co G\to H_1(G;\R)$ the canonical map.  Given a
non-zero element
\[
  f=\sum_{g\in G}f_gg\in \Z[G] \quad (f_g \in \Z)
\]
we refer to
\[
  \PP(f):=\text{convex hull of }\{\eps(g)\,|\, g\in G\mbox{ with }f_g\ne
    0\}\subset H_1(G;\R)
\]
as the \emph{polytope of $f$}. Now suppose that the ring $\Z[G]$ is a
domain, i.e.\ it has no non-zero element which is a left or right
zero-divisor. Conjecturally this is precisely the case when $G$ is
torsion-free. It is straightforward to see, that in this case the map
\[
  \begin{array}{c@{}c@{}c@{}c}
    \PP\colon{} & \Z[G]\sm \{0\} & {}\longrightarrow{} & \mfp(H)\\
                & f &{}\longmapsto{} & \PP(f)
  \end{array}
\]
is a monoid homomorphism. We refer to \cite[Lemma~3.2]{FT15} for
details.

Now let $G$ be a group that is torsion-free elementary amenable.  It
follows from \cite[Theorem~1.4]{KLM88} that the group ring $\Z[G]$ is
a domain.  Furthermore by \cite[Corollary~6.3]{DLMSY03} the ring
$\Z[G]$ satisfies the Ore condition, that is, for any two non-zero
elements $x,y\in \Z[G]$ there exist non-zero elements $p,q\in \Z[G]$
such that $xp=yq$. This implies that $\Z[G]$ admits a `naive' ring of
fractions, which usually is referred to as the Ore localization of
$\Z[G]$.  We refer to \cite[Section~4.4]{Pa77} for details.  In the
following we denote by $\K(G)^\times_{\op{ab}}$ the abelianization of
the multiplicative group $\K(G)^\times=\K(G)\sm \{0\}$.

The inversion $g\mapsto g^{-1}$ on $G$ extends linearly to the
standard involution $f\mapsto \overline{f}$ on~$\Z[G]$.  It extends
naturally to an involution on $\K(G)$ and on $\K(G)^\times_{\op{ab}}$.
A \emph{norm} in $\K(G)^\times_{\op{ab}}$ is an element that can be
written as $f\cdot \ol{f}$ for some $f\in \K(G)$.

If $G$ is an abelian group, then the question of whether elements in
$\K(G)^\times_{\op{ab}}$ are norms arises naturally in the study of
Alexander polynomials of link concordance and homology cylinders. We
refer to \cite{Ka78,Na78,Tu86,CFK11,CF13} for details. To a given link
or homology cylinder one can also associate `non-commutative Alexander
polynomials', that are elements in $\K(G)^\times_{\op{ab}}$, for
appropriate choices of non-abelian torsion-free elementary amenable
groups $G$. We refer to \cite{COT03,Co04,Ha05,Fr07,FH07,FV10} for
details.  Again, in the study of link concordance and homology
cylinders the question arises whether or not a given element in
$\K(G)^\times_{\op{ab}}$ is a norm, see e.g.\ \cite{Ki12}.

The group $\K(G)^\times_{\op{ab}}$ is unwieldy and little
understood. In particular it is difficult to determine whether or not
a given element is a norm. It is straightforward to see that the above
map $\PP\colon \Z[G]\sm \{0\} \to \PP(H)$ extends to a group
homomorphism
\[
  \PP\co \K(G)^\times_{\op{ab}} \longrightarrow \mfg(H).
\]
Furthermore this group homomorphism sends norms to norms.  Thus the
question arises which elements in $\mfg(H)$ are norms.  It is a
consequence of Poincar\'e duality, see e.g.\ \cite{FKK12}, that the
aforementioned 3-manifold invariants in $\K(G)^\times_{\op{ab}}$ are symmetric. In particular the corresponding elements in $\mfg(H)$ are symmetric. Thus the question
arises, whether there exist symmetric elements in $\mfg(H)$ that are
not norms.

\section{Proofs}

\subsection{Proof of Lemma~\ref{lem:notanorm}}
Let $\PP$ be a polytope in $\R^n$.  Recall that a face of $\PP$ is a
polytope in its own right. In fact a face of $\PP$ is the convex hull of a
proper subset of the vertex set of~$\PP$.  We call a polytope
contained in a face a \emph{subface of~$\PP$}. We leave the proof of
the following elementary lemma as an exercise to the reader.

\begin{lemma}\label{lem:subface}
  Let $\PP$ and $\QQ$ be polytopes in $\R^n$. Then any face of $\PP$
  is, up to translation, a subface of $\PP+\QQ$.
\end{lemma}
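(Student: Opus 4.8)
The plan is to represent the face $F$ of $\PP$ by a supporting functional and then to use the compatibility of faces with the Minkowski sum. First I would recall the standard description of faces: every proper face $F$ of a polytope $\PP$ is of the form
\[ \PP_u := \{x\in \PP \mid \ll u,x\rr = m_\PP(u)\}, \qquad m_\PP(u):=\max_{x\in \PP}\ll u,x\rr, \]
for some nonzero $u\in \R^n$, where $\ll\,,\,\rr$ denotes the standard inner product. Thus I would fix such a $u$ with $F=\PP_u$.

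The key step is the elementary but crucial fact that faces are additive under the Minkowski sum: for every $u\in\R^n$ one has
\[ (\PP+\QQ)_u \,=\, \PP_u + \QQ_u. \]
I would prove this by first noting $m_{\PP+\QQ}(u)=m_\PP(u)+m_\QQ(u)$, since maximizing $\ll u,p+q\rr$ over $p\in\PP$ and $q\in\QQ$ decouples into two independent maximizations. The inclusion $\PP_u+\QQ_u\subseteq (\PP+\QQ)_u$ is then immediate. For the reverse inclusion, any $z\in(\PP+\QQ)_u$ can be written as $z=p+q$ with $p\in\PP$ and $q\in\QQ$; from $\ll u,p\rr\le m_\PP(u)$, $\ll u,q\rr\le m_\QQ(u)$ and $\ll u,p\rr+\ll u,q\rr=m_\PP(u)+m_\QQ(u)$ we are forced into equality in both, so $p\in\PP_u$ and $q\in\QQ_u$.

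Granting this identity, the lemma would follow at once. I would pick any point $q_0\in\QQ_u$ (for instance a vertex of $\QQ$ on which $\ll u,\cdot\rr$ is maximal) and observe that
\[ F+q_0 \,=\, \PP_u+q_0 \,\subseteq\, \PP_u+\QQ_u \,=\, (\PP+\QQ)_u, \]
so the translate $F+q_0$ lies in the face $(\PP+\QQ)_u$ of $\PP+\QQ$. By the definition of a subface this exhibits $F$, up to the translation by $q_0$, as a subface of $\PP+\QQ$, which is exactly the claim. I expect the only genuine content to be the additivity of faces under the Minkowski sum; the remainder is formal, and since integrality is nowhere used, the statement holds for arbitrary polytopes as stated.
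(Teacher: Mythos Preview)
The paper does not actually prove this lemma; it explicitly leaves it ``as an exercise to the reader.'' Your argument is correct and is precisely the standard one the reader would be expected to supply: represent the given face as $\PP_u$ for a supporting functional $u$, use the additivity $(\PP+\QQ)_u=\PP_u+\QQ_u$ under Minkowski sums, and translate $\PP_u$ by any $q_0\in\QQ_u$ to land inside the face $(\PP+\QQ)_u$ of $\PP+\QQ$. The only minor omission is the improper face $F=\PP$, but this is handled trivially (e.g.\ by taking $u=0$, or directly since $\PP+q_0\subset\PP+\QQ$ for any $q_0\in\QQ$).
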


Now we are in a position to prove Lemma~\ref{lem:notanorm}.

\begin{proof}[Proof of Lemma~\ref{lem:notanorm}]
  We first show that $\mfpsym(2)\ne \mfpnorm(2)$.  Let $k\in \N$. We
  denote by $\PP$ the integral 2-dimensional symmetric polytope
  spanned by $(k,0), (k,1), (-k,0)$ and $(-k,-1)$. We want to show
  that $\PP$ is not an integral norm. We denote by $\XX$ the integral
  polytope spanned by $(0,0)$ and $(2k,1)$ and we denote by $\YY$ the
  integral polytope spanned by $(0,0)$ and $(0,1)$.

  Suppose $\PP$ is an integral norm. Thus we can write
  $\PP=\QQ+\ol{\QQ}$, where $\QQ$ is an integral polytope. By
  Lemma~\ref{lem:subface}, each face of $\QQ$ is, up to translation, a
  subface of $\PP$. This implies that, up to translation, each face of
  $\QQ$ is a subface of $\XX$ or of $\YY$. Since neither $\XX$ or
  $\YY$ admits one-dimensional integral subpolytopes we see that, up
  to translation, each face of $\QQ$ is either $\XX$, $\YY$ or a
  point. In particular, up to translation, $\QQ$ equals either
  $\{0\}$, $\XX$, $\YY$ or $\XX+\YY$. But it is straightforward to
  verify that in each case $\QQ+\ol{\QQ}\ne \PP$.
  
  This shows that $\mfpsym(2)\ne \mfpnorm(2)$. Now let $n\geq 3$. We
  consider the two maps
  \[
  \begin{array}{r@{}c@{}l} \Phi\colon \R^2&{}\longrightarrow{}& \R^n\\
    (x,y)&{}\longmapsto{}&(x,y,0)\end{array} \quad \mbox{ and } \quad
  \begin{array}{r@{}c@{}l} \Psi\colon \R^n&{}\longrightarrow{}& \R^2\\
    (x_1,\dots,x_n)&{}\longmapsto{}&(x_1,x_2)\end{array}.
  \]
  Both maps induce homomorphisms on the polytope monoids that map
  symmetric polytopes to symmetric polytopes and integral norms to
  integral norms.  Clearly $\Psi$ is a splitting of $\Phi$. Now it
  follows that if $\PP$ is an integral symmetric polytope in $\R^2$
  that is not a norm, then $\Phi(\PP)\subset \R^n$ is also a symmetric
  polytope that is not an integral norm.
\end{proof}

\subsection{Proof of Theorem~\ref{mainthm}}
In this section we will prove Theorem~\ref{mainthm}.  So given any $n$
we want to show that $\mfgsym(n)=\mfgnorm(n)$. This is equivalent to
showing that given any integral polytope $\PP$ in $\R^n$ there exist
integral polytopes $\QQ$ and $\RR$ such that
\begin{equation}
  \label{equation:norm-decomposition}
  \PP + \QQ + \overline{\QQ} = \RR + \overline{\RR}. \tag{$*$}
\end{equation}
The key idea is to prove this statement by induction on $n$ where we
perform the induction step by cutting along a hyperplane.

In the proof of Theorem~\ref{mainthm} we will use the following
definitions and notations.  A hyperplane $H$ in $\R^n$ can be written
as $H=\{ x\in \R^n \mid x\cdot v = 0\}$ for some $v\in \R^n$.  We
define the \emph{halves} of a real polytope $\PP \subset \R^n$ with
respect $H$ to be
\begin{align*}
  \PP_+ &:=\{x\in \PP \mid x\cdot v \ge 0\},\\
  \PP_- &:=\{x\in \PP \mid x\cdot v \le 0\}.
\end{align*}
Informally speaking, when $H$ meets $\PP$ in a proper subset, $\PP_+$ and
$\PP_-$ are obtained by cutting $\PP$ along~$H$.  We remark that
$\PP_+$ and $\PP_-$ may be exchanged depending on the choice of $v$,
but it will not cause any issue for our purpose.  It is known that
each of $\PP_+$, $\PP_-$ and $\PP\cap H$ is a real polytope whenever
it is nonempty.

\begin{lemma}[Normalization by a hyperplane]
  \label{lemma:division-by-hyperplane}
  Suppose $\PP\in\R^n$ is a symmetric polytope and
  $H\subset \R^n$ is a hyperplane.  Let $\PP_+$ and $\PP_-$ be the
  halves of $\PP$ with respect to~$H$.  Then
  \[
    \PP_++\ol{\PP_+} = \PP_-+\ol{\PP_-} = \PP_++\PP_- = \PP+(\PP \cap
    H).
  \]
\end{lemma}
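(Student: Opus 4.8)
The plan is to first use the symmetry of $\PP$ to collapse the three Minkowski sums $\PP_++\ol{\PP_+}$, $\PP_-+\ol{\PP_-}$ and $\PP_++\PP_-$ into a single expression, and then to prove the one substantive identity $\PP_++\PP_-=\PP+(\PP\cap H)$ by a double inclusion.

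First I would observe that symmetry forces $\ol{\PP_+}=\PP_-$ (and hence, taking mirror images, $\ol{\PP_-}=\PP_+$). Indeed, if $x\in\PP$ satisfies $x\cdot v\ge 0$, then $-x\in\PP$ because $\PP=\ol{\PP}$, and $(-x)\cdot v\le 0$, so $-x\in\PP_-$; running the same argument in the reverse direction gives the set equality $\ol{\PP_+}=\PP_-$. Substituting this into the first three terms yields $\PP_++\ol{\PP_+}=\PP_++\PP_-=\PP_-+\PP_+=\PP_-+\ol{\PP_-}$ with no further work, so those three are all equal to $\PP_++\PP_-$. (I would also note that $0\in\PP$ since $\PP$ is symmetric, whence $\PP_+$, $\PP_-$ and $\PP\cap H$ are all nonempty, so every expression makes sense.)

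It then remains to show $\PP_++\PP_-=\PP+(\PP\cap H)$. The inclusion $\PP+(\PP\cap H)\subseteq\PP_++\PP_-$ is routine: for $p\in\PP$ and $q\in\PP\cap H$ we have $q\cdot v=0$, so $q$ lies in both halves, and whichever half contains $p$ lets us write $p+q$ as a point of $\PP_+$ plus a point of $\PP_-$. The reverse inclusion is where the content lies. Given $a\in\PP_+$ and $b\in\PP_-$, I would consider the segment $[a,b]\subset\PP$; since $a\cdot v\ge 0\ge b\cdot v$, this segment crosses $H$ in a point $c=(1-t)a+tb$ for some $t\in[0,1]$, and $c\in\PP\cap H$. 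Setting $c'=a+b-c=ta+(1-t)b$, which is again a convex combination of $a$ and $b$ and therefore lies in $\PP$, I get $a+b=c+c'$ with $c\in\PP\cap H$ and $c'\in\PP$, so $a+b\in\PP+(\PP\cap H)$.

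The step I expect to be the main obstacle is precisely this reverse inclusion, namely the realization that an arbitrary element $a+b$ of the Minkowski sum can be rewritten through the two points $c$ and $c'$ into which $H$ splits the transversal segment $[a,b]$ symmetrically about its midpoint $\tfrac{a+b}{2}$; once this is spotted, everything is formal. An equivalent and perhaps cleaner route for the same identity would be to compare support functions: since $\PP=\PP_+\cup\PP_-$ one has $h_{\PP}=\max(h_{\PP_+},h_{\PP_-})$, and because support functions add under Minkowski sum the claim reduces to $\min(h_{\PP_+},h_{\PP_-})=h_{\PP\cap H}$, which I would verify by applying the same segment-crossing argument to maximizers of a linear functional on the two halves. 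I would present whichever of the two formulations turns out to be shorter.
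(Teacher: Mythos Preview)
Your proposal is correct and follows essentially the same route as the paper: reduce to $\PP_++\PP_-=\PP+(\PP\cap H)$ via $\ol{\PP_\pm}=\PP_\mp$, handle the easy inclusion by noting $\PP\cap H\subset\PP_+\cap\PP_-$, and for the hard inclusion take the convex combination $c=(1-t)a+tb$ on $H$ together with its ``reflected'' partner $c'=ta+(1-t)b\in\PP$ so that $a+b=c+c'$. The support-function reformulation you sketch at the end is a legitimate alternative packaging, but since it ultimately appeals to the same segment-crossing idea it does not shorten the argument.
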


\begin{proof}
  Since $\PP$ is symmetric, $\ol{\PP_{\pm}} = \PP_{\mp}$.  Therefore
  it suffices to show that $\PP_++\PP_- = \PP+(\PP \cap H)$.
  
  Each $x\in \PP$ lies in either $\PP_+$ or~$\PP_-$.  If $x\in \PP_+$,
  then since $\PP\cap H = \PP_+\cap \PP_- \subset \PP_-$, we have
  $\{x\}+(\PP\cap H) \subset \PP_+ + \PP_-$.  By symmetry,
  $\{x\}+(\PP\cap H) \subset \PP_+ + \PP_-$ when $x\in \PP_-$.  It
  follows that $\PP+(\PP \cap H) \subset \PP_++\PP_-$.

  For the reverse inclusion, suppose $x\in \PP_+$ and $y\in \PP_-$.
  Since $\PP$ is convex, there is $t\in [0,1]$ such that the point
  $z:=tx+(1-t)y$ lies on~$\PP\cap H$.  Consider $p:=(1-t)x+ty$.  Since
  $\PP$ is convex, $p\in \PP$.  Therefore $x+y = p+z$ lies in
  $\PP+(\PP\cap H)$.  This shows
  $\PP_++\PP_- \subset \PP+(\PP \cap H)$.
\end{proof}

We can not directly apply Lemma~\ref{lemma:division-by-hyperplane} to an integral polytope, since in general, given an integral polytope $\PP$ there does not exist a hyperplane, such that $\PP\cap H$ is again integral.
  To overcome
this, the following is useful:

\begin{lemma}[Vertical stretching]
  \label{lemma:vertical-stretch}
  Let $\PP$ be an integral polytope in $\R^n$.  Denote by $d\ZZ$ the line
  segment in $\R^n$ joining the origin and the point~$(0,\dots,0,d)$.
  As usual, identify $\R^{n-1}$ with the hyperplane of points with
  last coordinate zero in~$\R^n$.  Then for all sufficiently large
  $d>0$,
  \[
  (\PP +d\ZZ+\ol{d\ZZ})\cap \R^{n-1}
  \]
  is integral.
\end{lemma}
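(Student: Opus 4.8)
The plan is to compute the intersection explicitly and to identify it, for large $d$, with the projection of $\PP$ onto $\R^{n-1}$, which is manifestly integral. First I would write each point of $\R^n$ as a pair $(p,p_n)$ with $p\in\R^{n-1}$ and $p_n\in\R$ the last coordinate, and observe that $d\ZZ+\ol{d\ZZ}$ is exactly the vertical segment $\{0\}\times[-d,d]$. Consequently
\[
  \PP+d\ZZ+\ol{d\ZZ}=\{(p,p_n+s)\mid (p,p_n)\in\PP,\ s\in[-d,d]\}.
\]
Intersecting with $\R^{n-1}=\{x_n=0\}$ forces $s=-p_n$, and the constraint $s\in[-d,d]$ becomes $|p_n|\le d$. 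Hence the point $(p,0)$ lies in the intersection if and only if there is some $(p,p_n)\in\PP$ with $|p_n|\le d$.

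Next I would remove this auxiliary constraint by taking $d$ large. Since $\PP$ is compact, $M:=\max\{\,|x_n|\mid x\in\PP\,\}$ is finite, and for every $d\ge M$ the inequality $|p_n|\le d$ holds automatically for all points of $\PP$. Thus for such $d$ the point $(p,0)$ lies in the intersection precisely when $p$ lies in the image $\pi(\PP)$ of $\PP$ under the projection $\pi\colon\R^n\to\R^{n-1}$ forgetting the last coordinate; that is, under the identification of $\R^{n-1}$ with $\{x_n=0\}$,
\[
  (\PP+d\ZZ+\ol{d\ZZ})\cap\R^{n-1}=\pi(\PP)\times\{0\}.
\]
Finally, the projection of an integral polytope is integral: writing $\PP=\hull(S)$ for a finite set $S\subset\Z^n$, linearity of $\pi$ gives $\pi(\PP)=\hull(\pi(S))$ with $\pi(S)\subset\Z^{n-1}$, so $\pi(\PP)$ is the convex hull of finitely many lattice points.

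I expect no serious obstacle in this argument; the one point requiring genuine care is the threshold on $d$. It is truly necessary: for small $d$ the intersection is a proper, vertically clipped slice of $\PP$ whose boundary coordinates need not be integral, so one must take $d$ at least as large as the maximal height $M$ of $\PP$ before the clipping becomes vacuous and the intersection collapses to the integral projection $\pi(\PP)$. Making the inequality $d\ge M$ (and hence the phrase \emph{sufficiently large}) explicit is therefore the substantive content of the statement.
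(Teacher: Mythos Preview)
Your proof is correct and follows essentially the same approach as the paper: both identify $(\PP+d\ZZ+\ol{d\ZZ})\cap\R^{n-1}$ with the projection $\pi(\PP)$ once $d$ exceeds the maximal last-coordinate height of~$\PP$, and then note that $\pi(\PP)$ is integral. Your direct computation of the Minkowski sum is slightly more streamlined than the paper's two-inclusion argument via convexity, but the substance is identical.
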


\begin{proof}
  Denote by $\pi\colon \R^n\to \R^{n-1}$ the projection map which
  forgets the last coordinate.  Denote by $v_1,\dots,v_k\in \Z^n$ the
  vertices of~$\PP$.  Let $w_i=\pi(v_i)\in \R^{n-1}$ and write
  $v_i=(w_i,a_i)$ with $a_i\in \Z$.  Let $\YY$ be the convex hull of
  $\{w_1,\ldots,w_k\}$.  Suppose $d$ satisfies $d>|a_i|$ for all~$i$.
  Now it suffices to prove the following:

  \begin{claim}
    $(\PP + d\ZZ+\ol{d\ZZ})\cap \R^{n-1}=\YY$.
  \end{claim}

  Obviously we have
  \[
  (\PP +d\ZZ+\ol{d\ZZ})\cap \R^{n-1}\, =\, \pi\big((\PP +d\ZZ+\ol{d\ZZ})\cap
  \R^{n-1}\big)\, \subset \,\pi(\PP +d\ZZ+\ol{d\ZZ}).
  \]
  Since $\pi(d\ZZ)=\{0\}$, $\pi(\PP +d\ZZ+\ol{d\ZZ})=\pi(\PP)$.  Since
  the projection of the convex hull of a set is the convex hull of the
  projection of the set, we have $\pi(\PP)=\YY$.  It follows that
  $(\PP +d\ZZ+\ol{d\ZZ})\cap \R^{n-1}\subset \YY$.

  For the reverse inclusion, observe that $(w_i,a_i\pm d)\in \PP
  +d\ZZ+\ol{d\ZZ}$ for each~$i$.  Note that one of $a_i\pm d$ is
  negative and the other is positive.  By convexity we have
  $(w_i,0)\in \PP +d\ZZ+\ol{d\ZZ}$.  Once again by convexity, it
  follows that
  \[
  \YY \subset (\PP + d\ZZ+\ol{d\ZZ})\cap \R^{n-1}.  \qedhere
  \]
\end{proof}

Now we are ready for the proof of the main result of this paper.

\begin{proof}[Proof of Theorem~\ref{mainthm}]
  We prove the theorem by induction on $n$.  For $n=0$, the statement
  is trivial.  Suppose the conclusion holds for $n-1$, and suppose
  $\PP$ is a symmetric integral polytope in~$\R^n$.  As above we
  identify $\R^{n-1}$ with the hyperplane of points with last
  coordinate zero in~$\R^n$.  By Lemma~\ref{lemma:vertical-stretch},
  there is $d\in\N$ such that $(\PP +d\ZZ+\ol{d\ZZ})\cap \R^{n-1}$ is
  an integral polytope in~$\R^{n-1}$.  Write $\YY=\PP +d\ZZ+\ol{d\ZZ}$
  for brevity.  Since $\YY$ is symmetric, $\YY\cap\R^{n-1}$ is
  symmetric too.  Therefore, by the induction hypothesis, there are
  integral polytopes $\QQ$ and $\RR$ in $\R^{n-1}$ such that
  \[
  (\YY \cap \R^{n-1}) + \QQ+\ol{\QQ} = \RR+\ol{\RR}.
  \]
  By Lemma~\ref{lemma:division-by-hyperplane}, we have
  \[
  \YY + (\YY\cap \R^{n-1}) = \YY_+ +\ol{\YY_+}
  \]
  where $\YY_+$ denotes a half of $\YY$ with respect to the
  hyperplane~$\R^{n-1}$. Since $\YY\cap \R^{n-1}$ is integral we also deduce that $\YY_+$ is an integral polytope. From the above equations, it follows that
  \[
  \PP + (d\ZZ + \QQ) + \ol{(d\ZZ + \QQ)} = (\YY_+ + \RR) +\ol{(\YY_+ +
    \RR)}. \qedhere
  \]
\end{proof}

\end{document}